\newcommand{\Aeq}{\textbf{Aeq }}
\newcommand{\Peq}[1][(h,ht)]{Peq$#1$ }
\newcommand{\Quads}[1][(T)]{Quads$#1$}
\newtheorem{theorem}{Theorem}
\newtheorem{lemma}{Lemma}
\newtheorem{question}{Question:}
\newtheorem{definition}{Definition:}
\newtheorem{conjecture}{Conjecture:}
\begin{document}

{\title{Revisiting the quadrisection problem of Jacob Bernoulli.}}
{\author{Carl Eberhart}}
\begin{abstract}
Two perpendicular segments which divide a given triangle into 4 regions of equal area is called a \textbf{quadrisection} of the triangle.
Leonhard Euler proved in 1779 that every scalene triangle has a quadrisection with its triangular part on the middle leg.  We provide a complete description of the quadrisections of a triangle. For example, there is only one isosceles triangle which has exactly two quadrisections.
\end{abstract}
\maketitle

\section{Introduction}

In 1687, Jacob Bernoulli~\cite{Bernoulli} published his solution to the problem of finding two perpendicular lines which divide a given triangle into four equal areas.  He gave a general algebraic solution which required finding a root of a polynomial of degree 8  and worked this out numerically for one scalene triangle.



The question of whether Bernoulli's  polynomial equation of degree 8 has the needed root in all cases is not answered completely. Leonhard
 Euler's~\cite{Euler1}, in 1779, wrote a 22 page paper in which he gives a complete solution using trigonometry.


\begin{wrapfigure}{R}{0.5\textwidth}
\centering
\includegraphics[scale=.4]{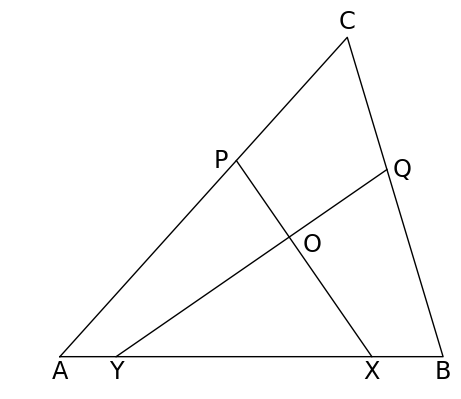}

\end{wrapfigure}

Euler states his solution in a theorem which we paraphrase.

\begin{theorem}(Euler 1779)
Given a scalene triangle $\Delta ABC$ with  $AB$ the side of middle length, there is a quadrisection $XP$ and $YQ$ intersecting in a point $O$ in the interior of the triangle so that $X$ and $Y$ lie on side $AB$ and triangle $XOY$ is one of the $4$ areas of the quadrisection. The other areas of the quadrisection are quadrangles.
\end{theorem}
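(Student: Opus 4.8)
\noindent\emph{Proof sketch.}
The plan is to build the two segments out of two mutually perpendicular \emph{halving lines} of $\Delta ABC$ (chords cutting its area in half). First I would record a dictionary. If $XP,YQ$ form a quadrisection as described --- with $A,X,Y,B$ in this order on $AB$, $P\in BC$, $Q\in CA$, and $\Delta XOY$ the triangular part --- then the line $XP$ is a halving line, since the part of $\Delta ABC$ on the $B$-side of it is exactly $\Delta XOY$ together with the quadrangle at $B$, of area $\tfrac14 T+\tfrac14 T=\tfrac12 T$ (writing $[\cdot]$ for area and $T=[\Delta ABC]$); likewise $YQ$. Conversely, let $\ell_1$ be any halving line cutting a triangle off at $A$ (so $\ell_1$ meets $AB$ at some $Y$ and $CA$ at some $Q$), let $\ell_2$ be the halving line perpendicular to $\ell_1$ that cuts a triangle off at $B$ (meeting $AB$ at $X$ and $BC$ at $P$), and put $O=\ell_1\cap\ell_2$; since two distinct halving lines of a triangle always meet in its interior, $O$ is interior, and $|AX|\le\tfrac12|AB|\le|AY|$ forces the order $A,X,Y,B$. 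A direct check then gives the four parts areas $r,\ \tfrac12 T-r,\ \tfrac12 T-r,\ r$ with $r=[\Delta XOY]$, so the pair is a quadrisection of the required shape exactly when $[\Delta XOY]=\tfrac14 T$. Hence it suffices to produce one such perpendicular pair with $[\Delta XOY]=\tfrac14 T$.

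Next I would set up a one-parameter family. The halving lines cutting a triangle off at $A$ form a pencil whose directions fill an arc $\alpha$ between the directions of the medians from $B$ and from $C$ (those two medians being the extreme members); likewise the ones cutting off at $B$ fill an arc $\beta$ between the medians from $A$ and from $C$. For $\phi\in\alpha$, write $\ell_1(\phi)$ for the corresponding halving line at $A$ and $\ell_2$ for the perpendicular one; the pair then has the combinatorial type above precisely for $\phi\in\Theta:=\alpha\cap(\beta+90^{\circ})$. The three medians meet at the centroid $G$, and the arcs $\alpha,\beta,\gamma$ (at $A$, $B$, $C$) have lengths $180^{\circ}-\angle BGC$, $180^{\circ}-\angle AGC$, $180^{\circ}-\angle AGB$, summing to $180^{\circ}$; a short computation shows $\Theta\neq\emptyset$ iff $\angle AGB\ge 90^{\circ}$, equivalently $a^{2}+b^{2}\le 5c^{2}$ with $a=|BC|,b=|CA|,c=|AB|$. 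When $AB$ is the middle side --- say $a>c>b$ --- the triangle inequality gives $a<b+c<2c$, so $a^{2}+b^{2}<5c^{2}$, and $\Theta$ is a genuine closed interval $[\phi_1,\phi_2]$ on which $X,Y,P,Q,O$, and hence $g(\phi):=[\Delta XOY]-\tfrac14 T$, vary continuously.

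Finally I would evaluate $g$ at the endpoints of $\Theta$, where one of $\ell_1,\ell_2$ degenerates to a median. If $\ell_2$ is the median from $A$ (so $X=A$), then $\Delta AOY$ and $\Delta AOQ$ are right-angled at $O$ and fill $\Delta AYQ$, whence $[\Delta AOY]=\tfrac12 T\cdot|OY|/|YQ|$ and $g>0$ iff $|OY|>|OQ|$ iff the median from $A$ makes a larger angle with $AB$ than with $CA$; since the diagonal issuing from a vertex of a parallelogram lies nearer the longer of the two sides at that vertex, this holds iff $|AB|<|CA|$, i.e.\ iff $c<b$. Symmetrically, at an endpoint where $\ell_1$ is the median from $B$ (so $Y=B$) one has $g>0$ iff $c<a$; and at an endpoint where a segment is the median from $C$ (so $X$ or $Y$ is the midpoint of $AB$, which occurs only in the borderline situations $\angle BGC<90^{\circ}$ or $\angle AGC<90^{\circ}$) a direct coordinate computation shows $g<0$. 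Since $c$ is the middle length, exactly one of $c<a$, $c<b$ holds, and checking the few possible endpoint types shows $g(\phi_1)$ and $g(\phi_2)$ have opposite signs, both nonzero by scalenity. The intermediate value theorem now yields an interior $\phi^{\ast}$ with $g(\phi^{\ast})=0$, giving the desired quadrisection: its triangular part $\Delta XOY$ lies on $AB$, and since $\phi^{\ast}$ is interior to $\Theta$ the points $P,Q$ are interior to $BC,CA$, so the remaining parts $AXOQ,\ YBPO,\ QCPO$ are genuine quadrangles. I expect the endpoint sign analysis --- especially the borderline cases involving the median from $C$ --- to be the main obstacle; that is where the scalene hypothesis and the choice of the \emph{middle} side really enter.
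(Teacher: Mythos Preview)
Your approach is sound and genuinely different from the paper's. The paper never argues via halving lines or an IVT on a one-parameter family; instead it fixes the base $[Y,X]$ of the triangular piece (equivalently fixes $x\in[\sqrt{2}/2,1]$, with $y=y(x)$ determined by the area equation), and observes that the perpendicularity equation then becomes the equation of a circle $Cir(x)$ in the $(h,ht)$-plane of apex positions. The resulting family of arcs $Arc(x)$ is analysed through an explicit map $F(x,\theta)$ whose Jacobian is computed; Brouwer invariance of domain then shows that the arcs sweep out the whole region $\overline{R_2}$ of triangles with $AB$ the middle side. So where you run a single continuous parameter inside one fixed triangle and invoke the IVT, the paper runs a parameter over \emph{all} triangles at once and invokes a covering/degree argument. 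Your route is more elementary and self-contained; the paper's route is heavier but immediately yields the finer classification (their Theorem~3): the same arc picture shows there is \emph{exactly one} quadrisection on the middle side, none on the longest side, and pins down precisely which triangles get extra quadrisections on the shortest side.

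One remark on the part you flagged as the main obstacle. You worry about the endpoint cases where $\ell_1$ or $\ell_2$ is the median from $C$. In fact these cases never occur when $AB$ is the middle side. For $\phi=\mathrm{dir}(m_C)$ to be an endpoint of $\Theta$ one needs the halving line perpendicular to $m_C$ to cut off at $B$; writing $C=(h,k)$, $M=(\tfrac12,0)$, a short computation shows this forces $(h-\tfrac34)^2+k^2<\tfrac1{16}$, and every apex in that small half-disk makes $AB$ the \emph{longest} side. The symmetric computation rules out $\ell_2=m_C$. Hence for a middle side the two endpoints of $\Theta$ are exactly your cases (i) $\ell_1=m_B$ and (iii) $\ell_2=m_A$, where you have already shown $g>0$ and $g<0$ respectively. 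So the IVT step goes through cleanly without any borderline analysis, and your sketch can be completed as written.
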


Euler does not claim that the triangular portion of a quadrisection must lie on the side of middle length. Also, he does not appear to discuss whether there is more than one quadrisection of a triangle, except to note that an equilateral triangle has 3 quadrisections.  In fact, we will see there are lots of triangles with quadrisections  where the triangular portion lies on the shortest side, but no triangles having a quadrisection  with the triangular portion on the longest side.

This paper was written in latex using an account  on cloud.sagemath.com.  Some animations designed to augment this paper can be found at
\url{http://www.ms.uky.edu/~carl/sagelets/arcsoftriangles.html}

\section{Initial analysis.}
Take any triangle $T$.  We can scale and position it in the plane so that two vertices $A$ and $B$ have coordinates $(0,0)$ and $(1,0)$ respectively and the third vertex $C=(h,ht)$ is chosen so that it is in the quadrant $h\ge 1/2$ and $ht > 0$. Under these assumptions, there is only one choice for $C$,
$(1/2,\sqrt{3}/2)$,  if $T$ is the equilateral triangle.

If $T$ is an isoceles triangle, then there are two possibilites:
(1) if the vertex angle is greater than $\pi/3$, then  $C=(1/2,ht)$ with
$ht<\sqrt{3}/2$ or $C=(h,\sqrt{2\,h - h^2})$ with $1/2 < h < 2$,
(2) if the vertex angle is less than $\pi/3$, then  $C=(1/2,ht)$ with $ht > \sqrt{3}/2$  or $C=(h,\sqrt{1-h^2})$ with
$1/2 < h <1$.

If $T$ is a scalene triangle, then depending on how we chose $AB$, $C$ will be in one of three open regions $R_1,R_2,R_3$ respectively:

(1)  If $AB$ is the longest leg, then then $h^2+ht^2 < 1$,

(2)  If $AB$ is the middle leg, then $h^2+ht^2>1$ and $(h-1)^2+ht^2 <1$, or

(3) If $AB$ is the shortest leg, then $(h-1)^2+ht^2>1$

Since $AB$ can be any one of the 3 sides of $T$, we know each of the three regions above together with its boundary of isosceles triangles contains a unique copy of each triangle up to similarity. Denote these sets by $\overline{R_1},\overline{R_2},\overline{R_3}$ respectively.

We can use inversion about a
circle\footnote{
Regarding points as vectors, the \textbf{inverse} of $p=(x,y)$ about the circle of radius 1 centered at $c=(h,k)$ is $p'=c+(p-c)/(p-c)^2=(h+(x-h)/d,k+(y-k)/d)$, where $d=(x-h)^2+(y-k)^2$.
\url{https://en.wikipedia.org/wiki/Inversive_geometry}}
to match up a triangle in one region with its similar versions in the other regions using inversion about a circle. Inversion about the unit circle interchanges  points  in region 2 with points  in region 1, and inversion about the unit circle centered at $(1,0)$ interchanges points in region 2 with points  in the unbounded region 3. For example, let $C$ be a point in region 2, and let $C^{'}$ be it's inverse about the unit circle centered at $(1,0)$.
Then $C^{'}$ is in region 1, and $B,C,C^{'}$ are collinear with $C$ between $B$ and
$C^{'}$.  Further $BC\,BC^{'} = 1$.  Using this, we see that $\Delta ABC$ is similar with $\Delta C^{'}BA$. By the same reasoning, letting $C^{*}$ denote the inverse of $C$ about the unit circle, we get that $\Delta ABC$ is similar to $\Delta AC^{*}B$.

\smallskip
\textbf{Terminology:} In what follows, when we refer to a \underline{triangle} $C=(h,ht)$, we are referring to $\Delta ABC$ with $A=(0,0),\,B=(1,0)$, where $h\ge 1/2$ and $ht >0$.

\begin{center}
\includegraphics[scale=.6]{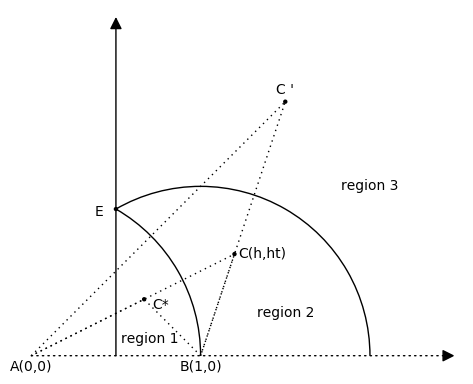}
\end{center}

\section{The equations for quadrisection of a triangle.}

Regarding points as vectors, we write $X=x\,B$, $Y=(1-y)\,B$, $P=s\,C$, and $Q=(1-r)B+r\,C$ for $x,y,r,s \in [0,1]$. Now  since  $\text{ area }\Delta XPY = s/2\, ht\, x = \text{ area }\Delta ABC =  ht/2$, we see that  $s=1/(2 x)$. Similarly, $r=1/(2 y)$.
Note that $1/(2 x)<1$, so $1/2<x$ and similarly for $y$.  So $1<x+y$.
\par

There are two equations which determine $x$ and $y$:

\noindent\textbf{1. The area of triangle $\Delta XOY$ is one fourth of the total area of the triangle.}

Writing $O=(x_0,y_0)$, this is the area equation  $4\,y_0(x+y-1)=ht$.  We can calculate $y_0$ by writing $O=u\,P+(1-u)\,X=v\,Q-(1-v)\,Y$ for some $u,v \in [0,1]$.  Expand this out to get two linear equations in $u,v$.  Solve for $u$ to get $u=\dfrac{( x +  y)-1}{   (x + \dfrac{s}{r} y)-s}$.  Substitute this into $O=u\,P+(1-u)\,X)$ and calculate
$O_1=y_0=\mathit{ht}\,\dfrac{1-( x +  y ) }{ 1 -  \dfrac{x}{s} - \dfrac{y}{r}}=ht\,\dfrac{1-(x+y)}{1-2\,(x^2+y^2)}$. Substitute this into the area equation, divide both sides by $ht$ and simplify to get the

\begin{center}
\textbf{Area Equation: (Aeq)}$\ \ (x^2+y^2)+4\,(x\,y-x-y)+5/2=0$
\end{center}

This equation has two solutions for $y=y(x)$ in terms of $x$, but the one we want is  $y(x)=2-2 x+\sqrt{12 x^2 -16 x +6}/2$.  Note $y(\sqrt{2}/2)=1$ and $y(1)=\sqrt{2}/2$.
Also $y(5/6)=5/6$. \textbf{Note:} In the duration, $y(x)=2-2 x+\sqrt{12 x^2 -16 x +6}/2$.

\noindent\textbf{2. $XP$ and $YQ$ are perpendicular}. This means  the dot product $(Q-Y) \cdot(P-X) = 0$, or
$\left(h\,s - x,\,\mathit{ht}\, s\right)\cdot \left(h\, r - r + y,\,\mathit{ht}\, r\right) = 0$.

Substitute $s=1/(2\,x)$, $r=1/(2\,y)$ and simplify to get the

\begin{center}
\textbf{Perpendicularity Equation: (Peq)}  ${\ \ \left( x^2 - h/2\right)} {\left(y^2 - (1-h)/2\right) = \left(ht/2\right)^{2}}$
\end{center}

To use \Peq to find all the quadrisections of a particular triangle $T$, substitute $y=y(x)$ into \Peq and solve the resulting equation in $x$ for each $(h,ht)$ (with $h\ge 1/2, ht > 0$) which is similar to $T$. The total number of solutions is the number of quadrisections.

\section{ \textbf{Peq} viewed as a  1-parameter family of circular arcs}

If instead of setting the values for $h$ and $ht$ in \Peq,  set the value of $x$ between $\sqrt{2}/2$ and $1$, and then set $y=y(x)$, then \Aeq is satisfied and we have fixed the base $YX$ on $AB$ of the triangular part of a quadrisection.  Then $\Peq[(x,y(x))]$ is a quadratic equation in $h$ and $ht$, which if we rewrite in the form

$\Peq[(x,y(x)):]\ \   ht^2 + \left(h-(x^2-y(x)^2+1/2)\right)^2= \left(x^2+y(x)^2-1/\right)^2$

\noindent we recognize as the circle $Cir(x)$ in the $h,ht$ plane with center $X(x)=(c(x),0)$ and radius $r(x)=x^2+y(x)^2-1/2$, where $c(x)=x^2-y(x)^2+1/2$.

The arc $Arc(x)$ of the circle $Cir(x)$ that lies in the quadrant $h\ge 1/2, ht>0$ consists of all triangles $(h,ht)$ with a quadrisection with base $YX=[(1-y(x),0),(x,0)]$.  $Arc(\sqrt{2}/2)$ lies in the unit circle, and forms the lower boundary of Region 2, and $Arc(1)$ lies in the unit circle centered at $(1,0)$ and forms the upper boundary of Region 2.  Let $(1/2,z(x))$ be the terminal point on $Arc(x)$, and let $\theta(x)$ be the radian measure of $\angle (1/2,z(x))(c(x),0)(2,0)$. So $\theta(x)=\arccos((1/2-c(x))/r(x)) $, and $z(x)=\sqrt{r(x)^2-(1/2-c(x))^2}$.

Let \textbf{Arcs} denote the union of all arcs $Arc(x)$.  The triangles $(h,ht)\in\text{\textbf{Arcs}}$ are precisely the triangles which have a quadrisection with the triangular portion on $[(0,0),(1,0)]$.

\subsection{A useful mapping.}

\begin{wrapfigure}{R}{0.6\textwidth}
\centering
\textbf{A small table of values.\vspace{.05in}}
\begin{tabular}{| c | c | c|c|c|c|}
\hline
$x$ & $y(x)$& $c(x)$ & $r(x)$ & $\theta(x)$ & $T(x,\theta(x))=(1/2,z(x))$ \\
\hline
$\sqrt{2}/2$ & 1 & 0 & 1  &  $\pi/3$  & $(1/2,\sqrt{3}/2)$\\
\hline
$5/6$ & $5/6$ & $1/2$ & $8/9$ & $\pi/2$ & $(1/2,8/9)$\\
\hline
$1$  & $\sqrt{2}/2$ & $1$ & $1$ & $2\pi/3$ & $(1/2,\sqrt{3}/2)$\\
\hline
\end {tabular}\vspace{-.05in}

\end{wrapfigure}

  Let

$D=\{(x,\theta)|x\in [\sqrt{2}/2,1], \theta\in [0,\theta(x) ]\}$.

\noindent We define a mapping $F$ from $D$ onto \textbf{Arcs} by

$F(x,\theta)=(c(x)+r(x)\cos(\theta),r(x)\sin(\theta))$.

$F$ maps each segment $[(x,0),(x,\theta(x))]$ in $D$ 1-1 onto the corresponding arc $Arc(x)$.

The Jacobian determinant of $F$ is $|J_F(x,\theta)|=c'(x) r(x) \cos(\theta)+r'(x)r(x)$.   This vanishes on the curve $J_0=\{ p(x)| x\in [\sqrt{2}/2,1]\}$, where $p(x)=(x,\arccos(-r'(x)/c'(x))$.  $D \setminus J_0$ is the union of two disjoint relatively open sets $U$, and $V$ in $D$, with $(\sqrt{2}/2,0)\in U$.  Let $D_1=U \cup J_0$ and $D_2=V \cup J_0$. See the diagram.
$D_1$ and $D_2$ are both topological closed disks, with boundaries $\partial D_1 = S_1 \cup S_2 \cup S_3 \cup J_0 \cup C_1$, and  $\partial D_2 = J_0 \cup S_1 \cup C_2$  as shown in the diagram. It follows from the definition of $F$ that it is 1-1 on each of $D_1$ and $D_2$.

Note that $F(S_1 \cup S_2)=Arc(1)$, $F(S_4)=Arc(\sqrt{2}/2)$, $F(S_3)=[(1,0),(2,0)]$ (not corresponding to any triangles), and $F(C_1)=F(C_2)=[(1/2,\sqrt{3}/2),(1/2,8/9)]$.
$F(J_0)$ is the concave up portion of the upper boundary of $F(D_1)$.
Each arc $Arc(x)$ for $x\in [5/6,1]$ is tangent to it, so $F(J_0)$ is the envelope\footnote{\url{https://en.wikipedia.org/wiki/Envelope_(mathematics)}} of those arcs.

Let $R_4$ denote the closed disk with boundary $F(J_0) \cup F(C_1)\cup F(S_4) \cup F(S_3)\cup F(S_2)$, and $R_5$ the closed disk with boundary $F(J_0) \cup F(S_1) \cup F(C_1)$.  Then $R_4\supset \text{\textbf{Arcs}}$, and $R_5 \subset \overline{R_3}$ with $R_5\cap R_4=F(J_0)$

\begin{lemma}
The transformation $F$ maps $D_1$ homeomorphically onto the closed disk $R_4 \supset \overline{R_1}$, and  $D_2$ homeomorphically onto $R_5$.
\end{lemma}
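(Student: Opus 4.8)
The plan is to prove the three assertions separately: that $F|_{D_1}$ is a homeomorphism onto $R_4$, that $F|_{D_2}$ is a homeomorphism onto $R_5$, and that $\overline{R_1}\subset R_4$. For the two homeomorphism statements I would invoke the standard compactness principle that a continuous injection from a compact space into a Hausdorff space is a homeomorphism onto its image. The map $F(x,\theta)=(c(x)+r(x)\cos\theta,\,r(x)\sin\theta)$ is continuous (indeed smooth) on $D$; the sets $D_1$ and $D_2$ are compact, being closed topological disks; the plane $\mathbf{R}^2$ is Hausdorff; and injectivity of $F$ on each of $D_1$ and $D_2$ is already available. As a cross-check, the Jacobian $|J_F|=r(x)\,[c'(x)\cos\theta+r'(x)]$ vanishes only on $J_0$, so $F$ is a local diffeomorphism on the relative interiors of $U$ and $V$; the given global injectivity on each piece is what upgrades this to an embedding. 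Hence $F|_{D_1}\colon D_1\to F(D_1)$ and $F|_{D_2}\colon D_2\to F(D_2)$ are homeomorphisms.

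It then remains to identify the two images. Since $F|_{D_1}$ is a homeomorphism of the closed disk $D_1$, invariance of domain forces it to carry the boundary circle $\partial D_1$ onto $\partial F(D_1)$ and the interior onto the interior, so $F(D_1)$ is a closed topological disk whose boundary is the simple closed curve $F(\partial D_1)$. Feeding in the boundary computations already recorded, namely $F(S_4)=Arc(\sqrt{2}/2)$, $F(S_3)=[(1,0),(2,0)]$, $F(S_2)\subset Arc(1)$, $F(C_1)=[(1/2,\sqrt{3}/2),(1/2,8/9)]$, and $F(J_0)$ the envelope, the curve $F(\partial D_1)$ is exactly the Jordan curve $F(J_0)\cup F(C_1)\cup F(S_4)\cup F(S_3)\cup F(S_2)$ used to define $R_4$. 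Because a simple closed curve bounds a unique closed disk (its Jordan interior together with the curve itself), and both $F(D_1)$ and $R_4$ are closed disks with this same boundary, they coincide: $F(D_1)=R_4$. The identical argument applied to $\partial D_2=J_0\cup S_1\cup C_2$, using $F(\partial D_2)=F(J_0)\cup F(S_1)\cup F(C_1)=\partial R_5$, gives $F(D_2)=R_5$.

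For the containment $R_4\supset\overline{R_1}$, recall that $R_4=F(D_1)$ is swept out by the arcs $Arc(x)$ and so consists of triangles carrying a quadrisection with triangular part on $[(0,0),(1,0)]$. I would establish the containment by a Jordan-curve location argument applied to $\overline{R_1}$: first verify that the boundary curve $\partial R_4$ does not meet the open region $R_1$, so that the connected set $R_1$ lies entirely within one complementary component of $\partial R_4$; then pin down which component by locating a single point of $R_1$ relative to $R_4$; and finally pass to closures to absorb the isosceles boundary of $\overline{R_1}$. Equivalently and more constructively, I would show that the family $\{Arc(x)\}_{x\in[\sqrt{2}/2,1]}$ sweeps across $\overline{R_1}$: for a fixed triangle $(h,ht)\in\overline{R_1}$, solving the perpendicularity equation with $y=y(x)$ for $x$ is, by the intermediate value theorem, reduced to checking that the two extreme arcs $Arc(\sqrt{2}/2)$ and $Arc(1)$ bracket the point.

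This containment is the step I expect to be the main obstacle. The two homeomorphism halves are soft topology that follow from compactness and the Jordan curve theorem once the boundary images are in hand. By contrast, showing that the closed disk $R_4$ covers all of $\overline{R_1}$ is a genuinely quantitative matter: it requires controlling where the arcs $Arc(x)$ and, crucially, the envelope $F(J_0)$ sit relative to the isosceles boundary curves of $\overline{R_1}$ and the circular arc $Arc(\sqrt{2}/2)$, and it is here that the geometry of the sweep—rather than its topology—must be pinned down.
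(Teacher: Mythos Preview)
Your proposal is correct and rests on the same core device as the paper: check that $F$ carries $\partial D_i$ onto the prescribed Jordan curve and then invoke invariance of domain to conclude that $F(D_i)$ is the closed disk bounded by that curve. The paper's proof is literally those two sentences; your version adds the compact--to--Hausdorff homeomorphism step and the Jordan--curve uniqueness step, but these are elaborations of the same argument rather than a different route.

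The one real difference is your treatment of the containment $R_4\supset\overline{R_1}$. You rightly flag it as a separate, quantitative claim and sketch an intermediate--value sweep argument for it; the paper, by contrast, does not argue it at all in the proof of the lemma---it is asserted in the statement and effectively read off from the picture and the boundary identifications $F(S_4)=Arc(\sqrt{2}/2)$, $F(S_2)\subset Arc(1)$, which already form the lower and upper boundaries of $\overline{R_1}$. So your concern that this is ``the main obstacle'' is well placed as mathematics, but be aware that the paper treats it as evident rather than as part of what the lemma's proof must establish.
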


\begin{proof}
$F$ maps the boundaries $\partial D_1$ and $\partial D_2$ onto the boundaries $R_4$ and $R_5$, respectively.
Consequently, it follows from the Brouwer invariance of domain theorem \footnote{\url{https://en.wikipedia.org/wiki/Invariance_of_domain}}, that $F(D_i)$ is the closed disk enclosed by $F(\partial D_i)$ for $i=1,2$.
\end{proof}

\begin{center}
\includegraphics[scale=.5]{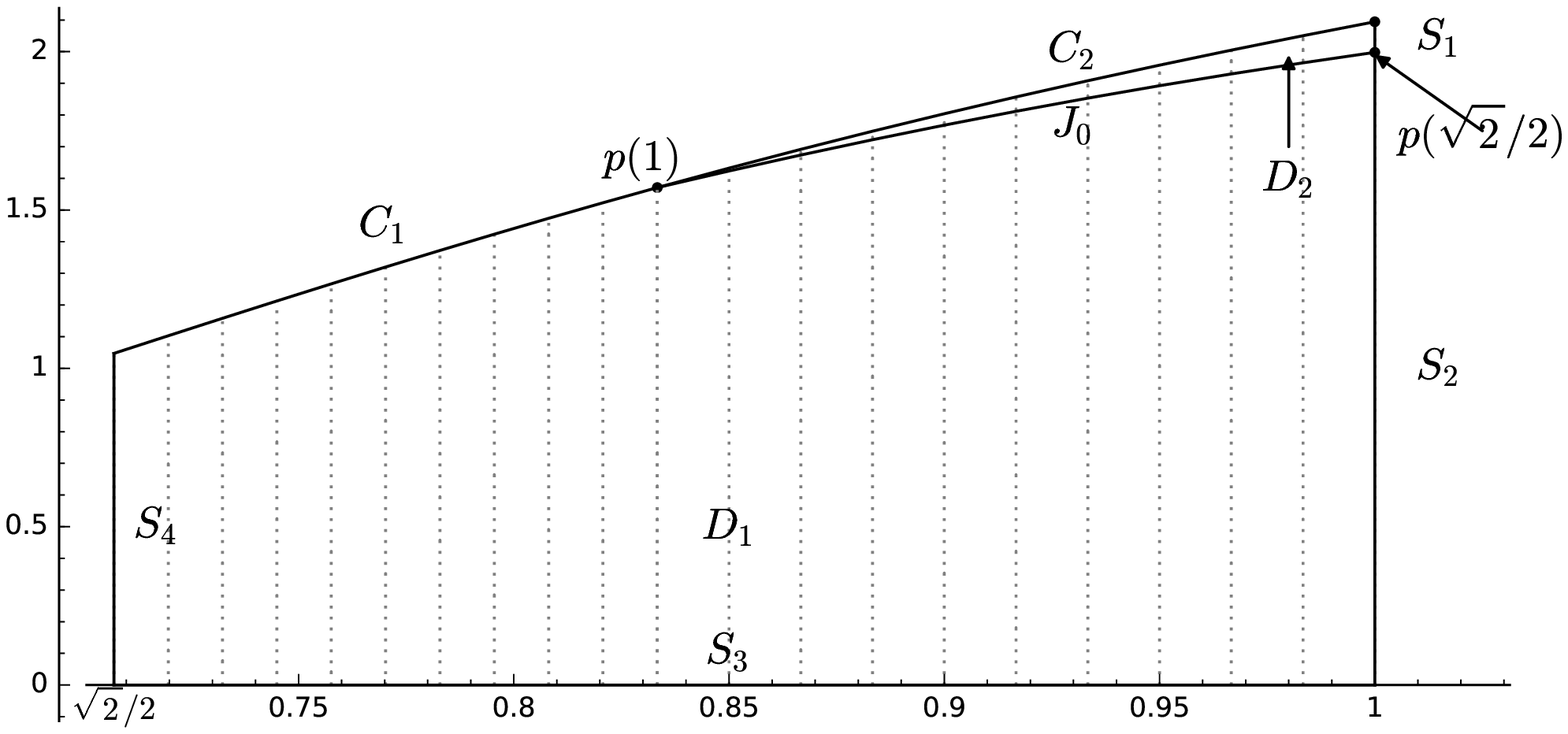}
\end{center}

Note that the orientation of $\partial D_2$ is the reverse of the orientation of $F(D_2)$.  This is because $|dF|<0$ on $D_2$.  $F$ folds $D_2$ over along $J_0$ and fits it onto $F(D_2)$.

\begin{center}
\includegraphics[scale=.5]{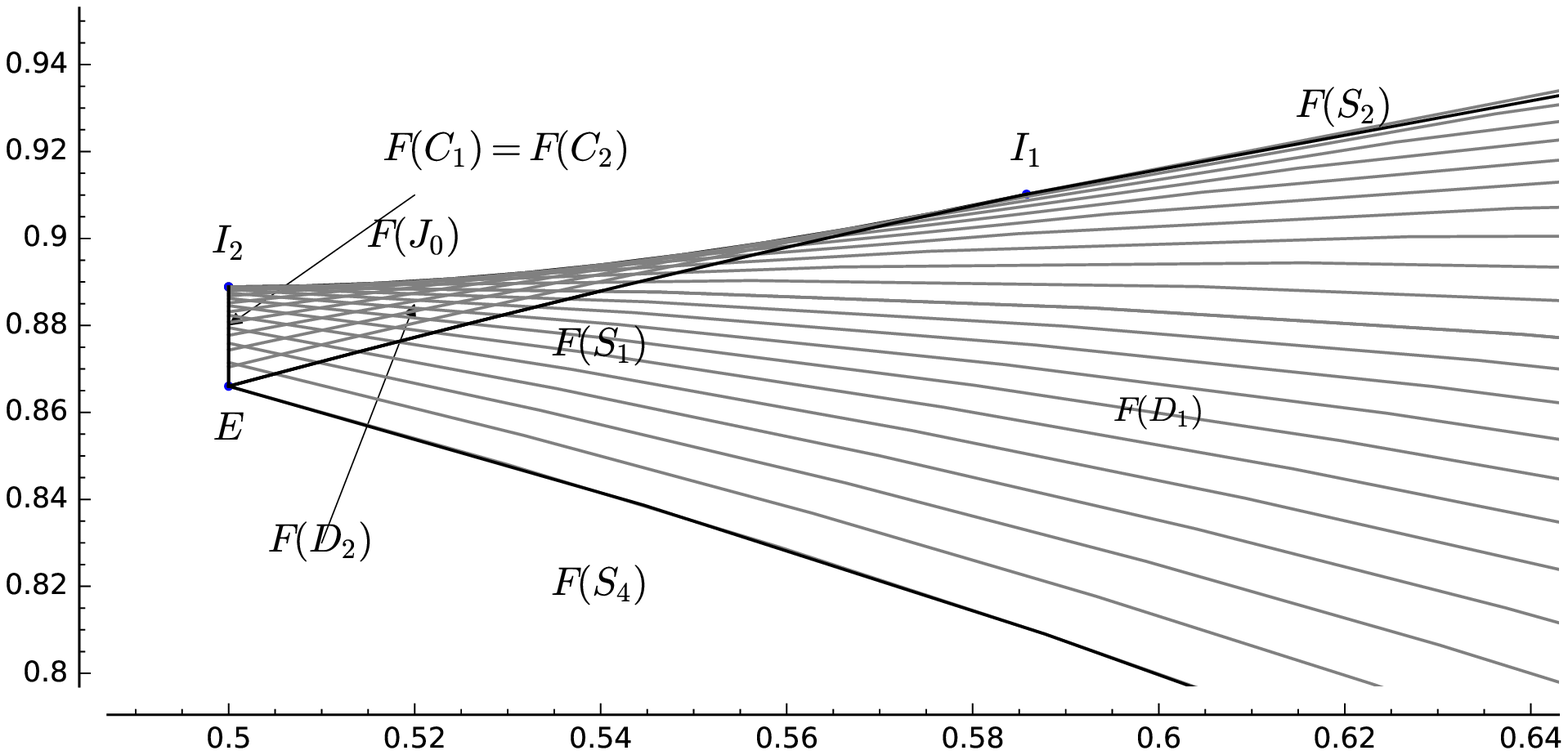}
\end{center}

Since $F(D_1)$ contains $R_1$, we have another proof of Euler's theorem. In fact, since $F(D_2)\cap R_1 = \emptyset$, we have a stronger version.

\begin{theorem}
Each scalene triangle $T$ has \textbf{exactly} one quadrisection whose triangular portion lies on the middle leg of $T$ and \textbf{no} quadrisection whose triangular portion lies on the longest leg of $T$.
\end{theorem}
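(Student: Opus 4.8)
The plan is to leverage the homeomorphism established in the Lemma together with the regions' geometry identified in the preceding discussion. Recall that a scalene triangle $T$ with a chosen side $AB$ corresponds to a unique point $C=(h,ht)$ in one of the open regions $R_1$, $R_2$, $R_3$: the longest leg corresponds to $\overline{R_1}$, the middle leg to $\overline{R_2}$, and the shortest leg to $\overline{R_3}$. A quadrisection with triangular portion on $AB$ exists for $T$ precisely when $(h,ht)\in\textbf{Arcs}$, and the number of such quadrisections equals the number of preimages of $(h,ht)$ under $F$, since $F$ maps the segment $[(x,0),(x,\theta(x))]$ bijectively onto $Arc(x)$ and distinct $x$ give the distinct possible bases.

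\medskip
\noindent\textbf{Step 1: Count preimages via the two disks.} First I would observe that $D = D_1 \cup D_2$ with $D_1\cap D_2 = J_0$, so for any point $w\in\textbf{Arcs}$ the fiber $F^{-1}(w)$ is contained in $D_1\cup D_2$. By the Lemma, $F$ is a homeomorphism on each of $D_1$ and $D_2$, so $w$ has at most one preimage in $D_1$ and at most one in $D_2$; these coincide only when the common preimage lies on $J_0$. Hence $|F^{-1}(w)| \le 2$, with equality exactly when $w$ has a preimage in the open set $V = D_2\setminus J_0$.

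\medskip
\noindent\textbf{Step 2: Middle leg — existence and uniqueness.} For the middle-leg claim, I would note that $\overline{R_1}$ (where the chosen side is the \emph{longest}) is contained in $F(D_1) = R_4$ by the Lemma. But a triangle whose chosen side is the \emph{middle} leg sits in $\overline{R_2}$, which is carried into $\overline{R_1}$ by inversion about the unit circle centered at $(1,0)$ (as established in the Initial Analysis, $\Delta ABC\sim\Delta C'BA$): placing the triangle so that its middle side is $AB$ and the \emph{longest} side plays the role of $AB$ are two descriptions of similar triangles, and exactly one of these two placements has its point $C$ in $\overline{R_1}\subset\textbf{Arcs}$. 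Concretely, every scalene $T$ has a representative point in $\overline{R_1}$, that point lies in $R_4$, and since $R_4\cap F(D_2) = F(J_0)$ while $F(J_0)\subset\overline{R_3}$ is disjoint from the interior $R_1$, the point lies in $F(D_1)$ but not in $V$. By Step 1 it has exactly one preimage. Translating back through the similarity $\Delta ABC\sim\Delta C'BA$, this says $T$ has exactly one quadrisection with triangular portion on its middle leg.

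\medskip
\noindent\textbf{Step 3: Longest leg — nonexistence.} For the longest-leg claim, a triangle placed so that its longest side is $AB$ has its point $C$ in region $R_1$ (open), i.e. $h^2+ht^2<1$. I would show $R_1\cap\textbf{Arcs}=\emptyset$: indeed $\textbf{Arcs}\subset R_4$, and the portion of $R_4$ below the envelope $F(J_0)$ is bounded below by $F(S_4)=Arc(\sqrt{2}/2)$, which lies on the unit circle and forms the \emph{lower} boundary of Region 2. Thus every point of $\textbf{Arcs}$ satisfies $h^2+ht^2\ge 1$, so no point of $\textbf{Arcs}$ lies in the open region $R_1$. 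Hence no triangle has a quadrisection with triangular portion on its longest leg.

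\medskip
The main obstacle I anticipate is Step 2's bookkeeping: one must be careful that "the quadrisection of $T$ on its middle leg" corresponds, under the similarity $\Delta ABC\sim\Delta C'BA$ induced by inversion, to "the quadrisection on side $AB$ of the triangle represented in $R_1$," and that this correspondence is a bijection on the set of quadrisections (it is, since similarity is an area-preserving, angle-preserving scaling that carries perpendicular bisecting segments to perpendicular bisecting segments). The topological input — that $F(D_1)\supseteq\overline{R_1}$ and $F(D_2)$ meets $R_4$ only along the envelope — is already packaged in the Lemma and the surrounding description of $R_4$ and $R_5$, so beyond that the argument is just tracking which region each placement of $T$ lands in and invoking the at-most-two-preimages count from Step 1.
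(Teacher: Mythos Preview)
Your Step~1 and Step~3 are sound and match the paper's approach. Step~2, however, has a genuine gap and in fact contradicts Step~3. You place the representative of $T$ in $\overline{R_1}$ and count one $F$-preimage there; but for $C\in R_1$ the side $AB$ is the \emph{longest} leg, so an $F$-preimage yields a quadrisection with triangular part on the longest leg, not the middle one. The similarity $\Delta ABC\sim\Delta C'BA$ (or any similarity between two placements of the same $T$) preserves the role of each side---longest corresponds to longest, middle to middle---so ``translating back'' cannot convert a longest-leg quadrisection into a middle-leg one. Worse, your own Step~3 shows $R_1\cap\textbf{Arcs}=\emptyset$, so the representative in $R_1$ actually has \emph{zero} $F$-preimages, directly contradicting the ``exactly one'' you assert in Step~2. (The source of the confusion is a typo in the paper's Lemma: from the boundary description---$R_4$ runs from $F(S_4)=Arc(\sqrt2/2)$, the $R_1$/$R_2$ interface, up through and past $Arc(1)$---one sees that in fact $R_4\supset\overline{R_2}$, not $\overline{R_1}$.)

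The repair is to discard the inversion detour and argue directly, as the paper intends: place $T$ with its \emph{middle} leg as $AB$, so $C\in R_2$. Then $C\in R_2\subset R_4=F(D_1)$ gives one preimage in $D_1$, while $F(D_2)=R_5\subset\overline{R_3}$ is disjoint from $R_2$, so there is no preimage in $D_2\setminus J_0$. By your Step~1 this yields exactly one quadrisection on the middle leg.
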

\begin{proof}
Scale and position $T$ so that its vertices are $(0,0),(1,0),C$ with $C=(h,ht)\in R_1$. Then since $R_1 \subset F^(D_1)$, $(x,\theta)=F^{-1}(h,ht)$  gives a  quadrisection of $T$ with its triangular portion on its middle leg. But also $F(D_2)=R_4 \subset \overline{R_2}$, and $\overline{R_2}\cap R_1 = \emptyset$ so $T$ has only the 1 quadrisection with it's triangular portion on the middle leg.  and no quadrisection with its triangular portion on the shortest leg.
\end{proof}

\smallskip

\subsection{Counting the quadrisections of a triangle}

\begin{definition}  Let \Quads[(T)]  denote the number of quadrisections of the triangle $T$.  If $(h,ht)$ is a point with $h\ge 1/2, ht > 0$  such that $T$ is similar to $\Delta A(0,0)B(1,0)C(h,ht)$, then we write \Quads[(h,ht)]=\Quads[(T)].
\end{definition}

\begin{theorem}
\textbf{(Quadrisection theorem for triangles) }  Let $T$ be a triangle. Let $C=(h,ht)$ be the unique triangle in $\overline{R_2}$  similar to $T$.
Let $C'=(h',ht')$ be the inverse of $C$ about the unit circle with center $(1,0)$.

\begin{enumerate}

\item  If $C' \in F(D_2)\setminus F(J_0)$ then \Quads$=3$.

\item  If $C' \in F(J_0)$ and $C' \ne I_1$, then \Quads$=2$.

\item  Otherwise (that is, if $C'\notin F(D_2)$ or $C'=I_1$), then \Quads$=1$.
\end{enumerate}

\end{theorem}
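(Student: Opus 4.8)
The plan is to express $\mathrm{Quads}(T)$ through the fiber $F^{-1}(C')$ and then read off its cardinality from the Lemma. I treat a scalene $T$ first. Since the triangular part of any quadrisection lies on exactly one of the three sides, and the preceding theorem gives exactly one quadrisection with that part on the middle leg and none on the longest leg, $\mathrm{Quads}(T)=1+N$, where $N$ counts the quadrisections of $T$ whose triangular part lies on the shortest leg. Next I would identify $C'$: by the inversion--similarity dictionary of Section 2, inversion in the unit circle centered at $B=(1,0)$ carries $C\in R_2$ to $C'\in R_3$ with $\triangle ABC\sim\triangle C'BA$; reading off which side then lies on $[(0,0),(1,0)]$ shows that $C'$ is precisely the standard copy of $T$ with its \emph{shortest} leg on $[(0,0),(1,0)]$. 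So $N$ is the number of quadrisections of the triangle $C'$ with triangular part on $[(0,0),(1,0)]$, which by the construction of the family $Arc(x)$ is $\#\{x\in[\sqrt2/2,1]:C'\in Arc(x)\}$, and since $F$ maps each fiber $\{x\}\times[0,\theta(x)]$ bijectively onto $Arc(x)$, this equals $\#F^{-1}(C')$. Hence $N=\#F^{-1}(C')$.

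Now I would compute the fiber. Because $D=D_1\cup D_2$ with $D_1\cap D_2=J_0$, and, by the Lemma, $F|_{D_1}$ and $F|_{D_2}$ are homeomorphisms onto $R_4$ and $R_5$, inclusion--exclusion on cardinalities gives $\#F^{-1}(q)=2$ if $q\in R_5\setminus F(J_0)$, $\#F^{-1}(q)=1$ if $q\in F(J_0)$ or $q\in R_4\setminus R_5$, and $\#F^{-1}(q)=0$ if $q\notin R_4$ (using $R_5\subset R_4$ and $F(J_0)\subset\partial R_5$). Since $C'\in R_3$ while $R_5$ is exactly the part of $R_4$ lying in $\overline{R_3}$, we have $C'\in R_4\iff C'\in R_5$, so the middle alternative $R_4\setminus R_5$ never occurs for $C'$ and the count collapses to: $C'\in F(D_2)\setminus F(J_0)\Rightarrow N=2\Rightarrow\mathrm{Quads}(T)=3$; $C'\in F(J_0)\Rightarrow N=1\Rightarrow\mathrm{Quads}(T)=2$; $C'\notin F(D_2)\Rightarrow C'\notin R_4\Rightarrow N=0\Rightarrow\mathrm{Quads}(T)=1$. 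This is the theorem for scalene $T$; the topological content -- that a fiber of $F$ has size $0$, $1$, or $2$ according to membership in $R_4$, $R_5$, $F(J_0)$ -- is supplied entirely by invariance of domain via the Lemma.

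The hard part will be the boundary of $R_2$. When $T$ is isoceles or equilateral, $C$ lies on $\partial R_2$ and the reduction $\mathrm{Quads}(T)=1+N$ of the first paragraph must be replaced by a symmetry-aware count: the copy of $T$ placed with a shortest side on $[(0,0),(1,0)]$ then carries a reflection across $h=1/2$, which pairs up quadrisections and imposes a parity in the bookkeeping (a quadrisection fixed by the reflection has its triangular part on the shortest side), and one must track exactly which isoceles triangles land on the curves $F(J_0)$, $F(C_1)$ and $Arc(1)$. Carrying this out should leave the three cases intact except at one point $I_1$: this $I_1$ lies on $F(J_0)$, but its only $F$-preimage is $p(1)\in J_0$, which has $x=1$, i.e. the degenerate configuration $X=B$, and $I_1$ lies on no $Arc(x)$ with $x<1$, so genuinely $N=0$ there and $\mathrm{Quads}(I_1)=1$ -- which is why $I_1$ is split off from case 2. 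So the main obstacles are: verifying $R_5=R_4\cap\overline{R_3}$ so the three cases are disjoint and exhaustive for $C'$, and handling the reflective symmetry at the isoceles boundary cleanly enough to single out $I_1$ as the lone exceptional triangle.
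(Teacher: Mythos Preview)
Your argument is essentially the paper's argument, carried out with more care: both reduce the count to $1+\#F^{-1}(C')$ via Theorem~2 and then read the fiber size off the Lemma (the paper phrases this as ``$F(D_2)\setminus F(J_0)$ is doubly covered, once by $D_2\setminus J_0$ and once by $D_1$,'' which is exactly your inclusion--exclusion). Your explicit check that the alternative $C'\in R_4\setminus R_5$ cannot occur because $C'\in\overline{R_3}$ is a point the paper leaves implicit, and your acknowledgement that the isosceles boundary needs a separate symmetry argument is likewise something the paper passes over.

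One small discrepancy worth noting: for the exceptional point $I_1$ you place the unique $F$-preimage at $p(1)$ (so that $x=1$ and $X=B$ is degenerate), whereas the paper writes $I_1=F(p(\sqrt 2/2))$. Your identification is the internally consistent one: since $F$ carries the vertical segment over $x$ onto $Arc(x)$, any preimage of $I_1$ with first coordinate $\sqrt 2/2$ would force $I_1\in Arc(\sqrt 2/2)$, contradicting the assertion (which both you and the paper use) that $Arc(1)$ is the only arc through $I_1$. So on this detail you have corrected what appears to be a typo in the paper.
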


\begin{proof}
Assume case 1. $F(D_2)\setminus F(J_0)$ is doubly covered, once by $D_2 \setminus J_0$ and once by $D_1$.  So $T$ has two quadrisections with the triangular part on a shortest side.  Since $T$ also has a quadrisection with the trianglular part on its middle side, \Quads$=3$.

Assume case 2.  $F(J_0 \setminus \{p(\sqrt{2}/2)\})$ is covered once by $F$.  So $T$ has a single quadrisection with the triangular portion on the shortest side.  Since $T$ also has a quadrisection with the trianglular part on its middle side, \Quads$=2$

Assume case 3.   If $C'\notin F(D_2)$ then $T$ has only the quadrisection with triangular part on a middle side.  If $C'=C=I_1=F(p(\sqrt{2}/2))$, then $Arc(1)$ is the only arc that contains $C'$.   So \Quads$=1$.
\end{proof}
The isoseceles triangles $I_1$ and $I_2$ occupy interesting positions amongst the isosceles triangles.  The vertex angle of $I_1$ is greater than $\pi/3$ and it has only 1 quadrisection.  Any other isosceles triangle with these 2 properties has a larger vertex angle.  $I_2$ is interesting because it is the only isosceles triangle with exactly 2 quadrisections. But also, one of the quadrisections is rational, that is, the vertices of the triangle and  the endpoints of the segments forming the quadrisection are rational.

\begin{center}
\includegraphics[scale=.4]{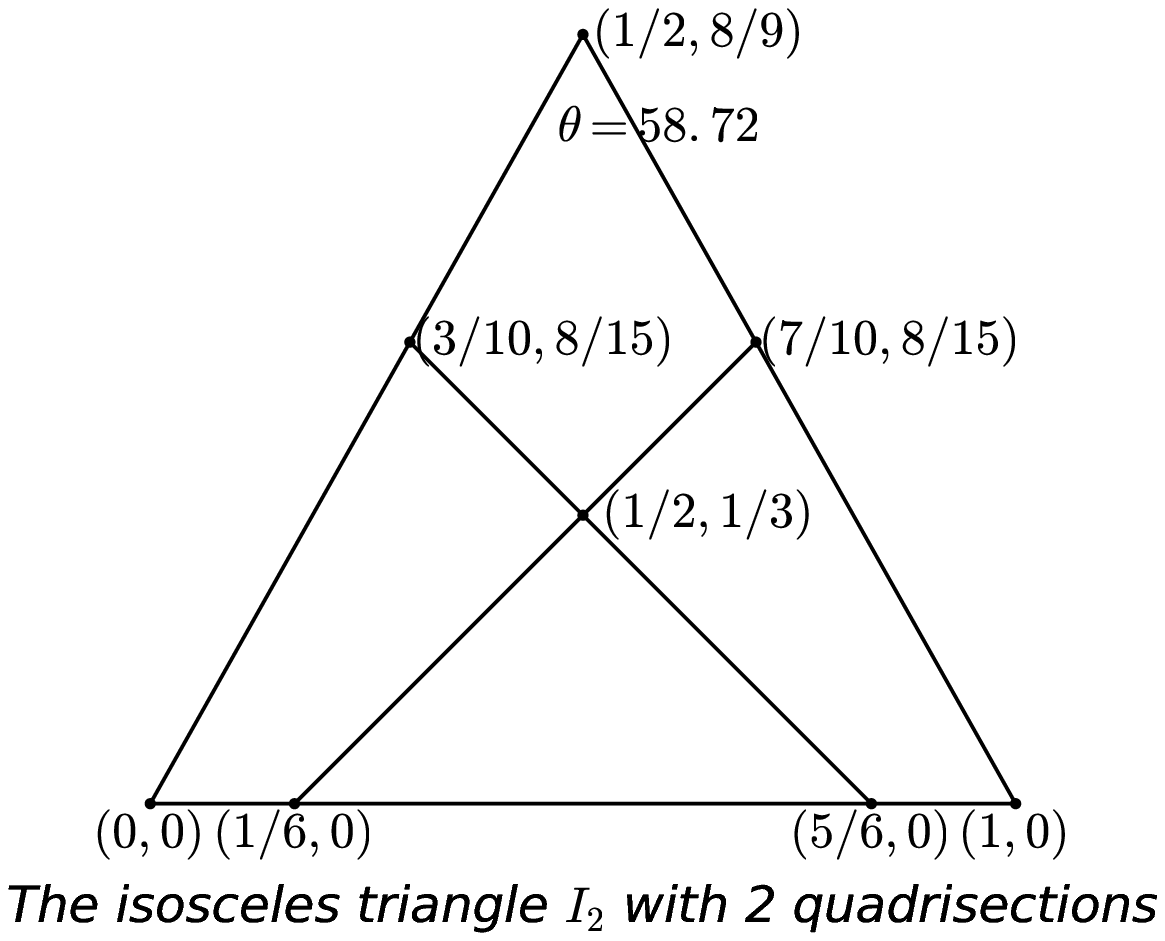}
\includegraphics[scale=.4]{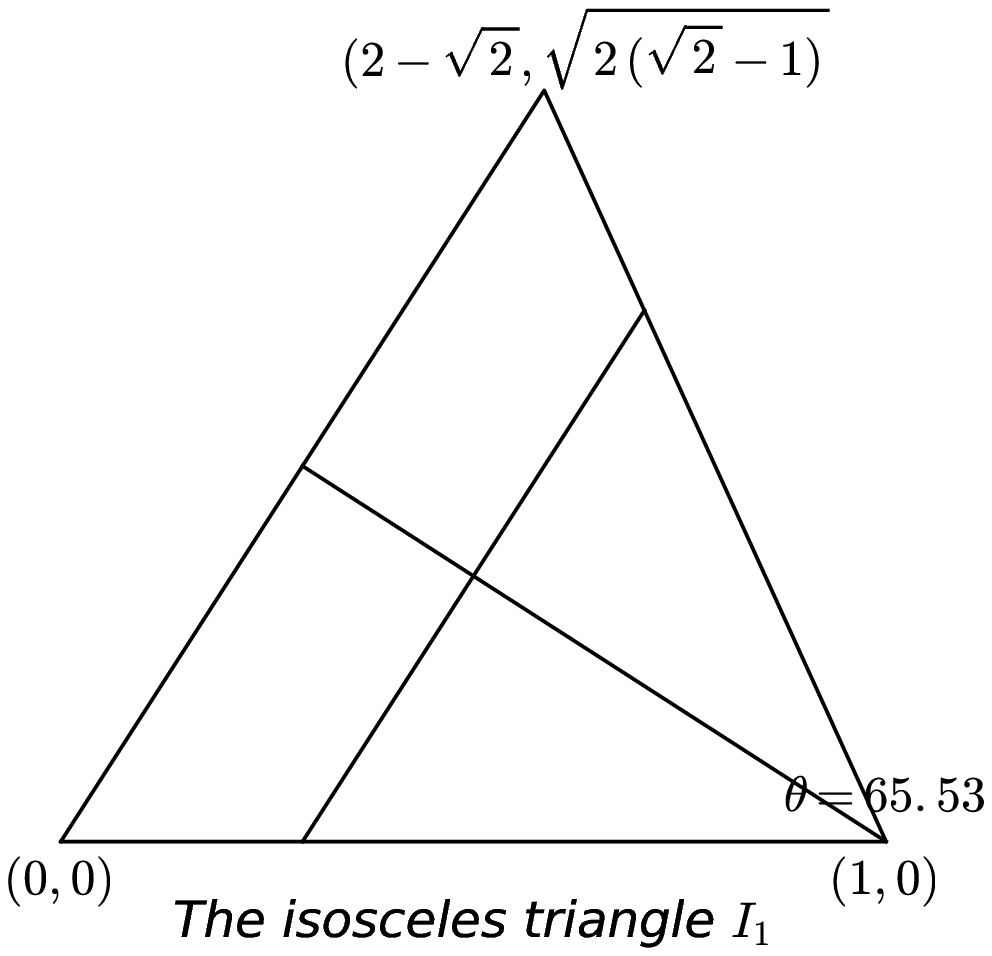}
\end{center}

\begin{question}  Is there another triangle with rational vertices and a rational quadrisection?
\end{question}

\section{The space of triangles}

Let $\Upsilon$ be the set $\{(h,ht)\ |\  1/2 \le h < 2,\  ht >0,\ \text{with}\ \sqrt{1-h^2}\le ht \le \sqrt{h(2-h)}\}$, with the subspace topology from the Cartesian plane.  As observed earlier, every triangle is similar to exactly one triangle  $[A(0,0),B(1,0),C(h,ht)]$ with $(h,ht)\in \Upsilon$. So it is natural to call $\Upsilon$ the space of triangles.  In that space, we see that the reflection of $T(J_0)$ about the circle of radius 1 centered at $(1,0)$ is an arc $S_2$ of scalene
triangles from $I_1$ to the reflection of $I_2$ which is $C=(175/337,288/337)$. All these have \Quads[(C)]$=2$, except \Quads[(I_2)]$=1$.  This arc separates $\Upsilon$ into two relatively open sets $U$ and $V$, with the equilateral triangle $E=(1/2,\sqrt{3}/2)\in U$. \Quads[(C)]$=3$ for each $C\in U$, and \Quads[(C)]$=1$ for each $C\in V$.

\begin{wrapfigure}{R}{0.6\textwidth}
\centering
\includegraphics[scale=.5]{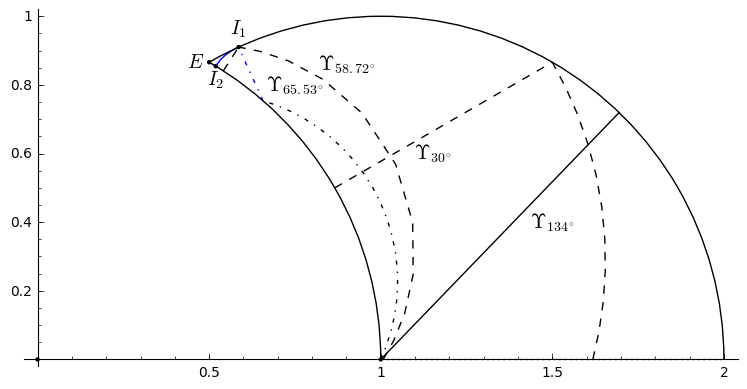}
\end{wrapfigure}
Note that the vast majority of triangles have only one quadrisection, and the only ones that have more than one quadrisection are very near the equilateral triangle.  It also gives a useful way to visualize the position of certain classes of triangles in the space, such as the isosceles triangles, which form the boundary of the space.  Where does the class $\Upsilon_{\pi/2}$ of right triangle sits in $\Upsilon$?  Since the points $(1,ht)$ with $ht>1$ lie in region 3, they invert onto the points $(1,1/ht)$, so
it is simply the vertical segment $((1,0),(1,1)]$ in $\Upsilon$. Topologically, it separates the space $\Upsilon$ into two pieces. The right hand piece consists of all triangles with an obtuse angle; the left hand piece all triangles with all angles acute.  Similarly, the class $\Upsilon_{\theta}$ for all $\theta$ with $\pi/2 \le \theta < \pi$ is a segment in $\Upsilon$ separating it into two pieces.   If $0 < \theta \le \pi/2$, then one works out that $\Upsilon_{\theta}$  consists of a radial segment in $\Upsilon$ together with a circular arc in $\Upsilon$ which separates $\Upsilon$ into three pieces.

There are two angles of particular interest:

\noindent 1) the vertex angle $\theta$ of $I_2$, which we have calculated as $\theta = 360/\pi \arctan(9/16)\approx 58.72^o$. Any triangle with an angle at least $\theta$ has only the one quadrisection.

\noindent 2) the vertex angle of $I_1$, which we have calculated as $\theta = 180/\pi \arctan(\sqrt(2\sqrt{2}-2))\approx 65.53^{o}$

\section{Further Questions.}

This half-lune model for the space of triangles is reminiscent of the Poincare model for the hyperbolic plane.
Since no two triangles are similar in the hyperbolic plane, there are more triangles to quadrisect. This suggests that the quadrisection problem  may be more delicate to solve completely in the hyperbolic plane.

\begin{question} What is the analogue for Theorem 3 in the hyperbolic plane?
\end{question}

Our discussion of the quadrisection problem for triangles naturally leads to the question of determining the quadrisections of any convex polygon.   Investigating this question leads to the following conjecture.

\begin{conjecture} A convex $2\,n+1$-gon $R$ has at most $2\,n+1$ quadrisections.
Further, if $R$ is 'sufficiently close to' the regular $2\,n+1$-gon, then $R$ has $2\,n+1$ quadrisections.
\end{conjecture}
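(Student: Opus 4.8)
\noindent\emph{Sketch of an approach.} The plan is to reduce the problem --- in the same spirit as the reduction to the Perpendicularity Equation --- to counting the zeros of a single scalar function on the circle of directions. Normalize $R$ to have unit area, and for each direction $\theta$ let $\beta(\theta)$ be the unique chord of $R$ in direction $\theta$ that bisects the area. Any two area-bisecting chords of a convex body meet (otherwise one of the two halves cut off by one of them would be contained in a half cut off by the other, forcing the chords to coincide), so $\beta(\theta)$ and $\beta(\theta+\pi/2)$ meet at a single interior point $O(\theta)$; a uniqueness argument shows $O(\theta)$ cannot lie on $\partial R$, so these two perpendicular chords cut $R$ into four convex pieces $I(\theta),\dots,IV(\theta)$, numbered counterclockwise about $O(\theta)$. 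Since each chord bisects, $|I|=|III|$, $|II|=|IV|$, and $|I|+|II|=\tfrac12$, so the configuration is a quadrisection precisely when $D(\theta):=|I(\theta)|-|II(\theta)|$ vanishes; conversely each chord of any quadrisection cuts off area $\tfrac12$, hence is the area bisector in its direction, so every quadrisection arises this way for a unique $\theta\in[0,\pi/2)$. Thus the number of quadrisections of $R$ equals the number of zeros of $D$ on $[0,\pi/2)$. Inspecting the region labels gives $D(\theta+\pi/2)=-D(\theta)$ and $D(\theta+\pi)=D(\theta)$; the first identity already re-proves that every convex region has at least one quadrisection.

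For the regular $(2n+1)$-gon $R_0$ I would exploit its symmetry. Rotation by $2\pi/(2n+1)$ carries the bisector pair at angle $\theta$ to the one at $\theta+2\pi/(2n+1)$ and preserves the counterclockwise labelling, so $D$ is also $2\pi/(2n+1)$-periodic, which together with $D(\theta+\pi)=D(\theta)$ forces the period $\pi/(2n+1)$; then $D(\theta+\pi/2)=-D(\theta)$ becomes $D(\theta+\tfrac{\pi}{2(2n+1)})=-D(\theta)$. Hence $D$ changes sign across each of the $2n+1$ consecutive intervals of length $\tfrac{\pi}{2(2n+1)}$ that tile $[0,\pi/2)$, so $R_0$ has at least $2n+1$ quadrisections --- concretely, the ones symmetric about its $2n+1$ axes (with an axis $L$ as one chord and the perpendicular area bisector as the other, reflection in $L$ fixes both lines and splits the four pieces into two congruent pairs, forcing each to have area $\tfrac14$; since $2n+1$ is odd no two axes are perpendicular, so these quadrisections are distinct). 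Finally $\beta(\theta)$, and hence $D(\theta)$, depends continuously on the vertices of the polygon, uniformly in $\theta$, so a genuine sign change of $D_{R_0}$ across each of those $2n+1$ intervals forces the same for every polygon close enough to $R_0$; such a polygon therefore has at least $2n+1$ quadrisections, hence --- granting the upper bound below --- exactly $2n+1$.

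The substance of the conjecture, and the step I expect to be the main obstacle, is the universal upper bound: every convex $(2n+1)$-gon has at most $2n+1$ quadrisections, i.e.\ $D$ has at most $2n+1$ zeros on $[0,\pi/2)$. The natural attack is combinatorial: as $\theta$ runs over $[0,\pi/2)$ the four rays from $O(\theta)$ land on fixed edges except at finitely many critical angles where a bisector passes through a vertex, and on each resulting cell the ray-lengths $\rho_1(\theta),\dots,\rho_4(\theta)$, the apex $O(\theta)$, and therefore $D(\theta)$ are explicit algebraic functions of $\theta$ (rational in $\tan\theta$ once the two bisecting conditions are cleared), so one can count zeros cell by cell. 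There is a suggestive coincidence: over $[0,\pi/2)$ the two endpoints of $\beta(\theta)$ sweep half of $\partial R$ and those of $\beta(\theta+\pi/2)$ the other half, so there is exactly one critical angle per vertex --- about $2n+1$ cells, matching the conjectured bound. But the crude count this yields is slightly too weak (at most one zero per cell would still allow roughly $2n+2$), and pinning the sharp constant seems to require a genuinely global input: either a convexity-forced monotonicity of $D$ on each cell (or of $D'$, whose formula in the $\rho_i$ and $O'(\theta)$ can be written out but whose sign is not transparent) together with a way to exclude the extremal pattern, or a topological/degree argument on the circle of directions. The case $n=1$ is exactly Theorem~3 (the quadrisection theorem for triangles) together with the analysis of the region $U$ near the equilateral triangle, but that proof is built on the inversive geometry of \textbf{Arcs}, which is special to triangles; a uniform proof for all $n$ appears to need a different mechanism, which is why this is stated only as a conjecture.
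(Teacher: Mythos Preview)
This statement is recorded in the paper as a \emph{conjecture}, not a theorem: the paper offers no proof, so there is nothing to compare your attempt against. What you have written is not a proof either, and you say so yourself; it is an outline that isolates where the difficulty lies.

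That said, the framework you set up is sound and is the natural one. The reduction to counting zeros of $D(\theta)=|I(\theta)|-|II(\theta)|$ on $[0,\pi/2)$ is correct: both chords of any quadrisection are area bisectors, so every quadrisection arises from a unique $\theta$, and the identities $D(\theta+\pi/2)=-D(\theta)$, $D(\theta+\pi)=D(\theta)$ follow from the relabelling you describe. Your symmetry computation for the regular $(2n+1)$-gon is also correct: the rotational period $\pi/(2n+1)$ combined with the anti-period $\pi/2$ forces the short anti-period $\pi/(2(2n+1))$, hence at least $2n+1$ sign changes on $[0,\pi/2)$, and these are realised by the axis-symmetric quadrisections. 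One refinement worth noting: you can make the ``sufficiently close'' half of the conjecture \emph{unconditional} (not contingent on the upper bound) by checking that those $2n+1$ zeros of $D_{R_0}$ are simple. Then uniform continuity of $(R,\theta)\mapsto D_R(\theta)$ gives exactly $2n+1$ zeros for every nearby $R$, without invoking the global bound.

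The genuine gap is exactly the one you flag: the upper bound $\#\{D=0\}\le 2n+1$ for an arbitrary convex $(2n+1)$-gon. Your cell count is right (the four bisector endpoints together sweep $\partial R$ once as $\theta$ traverses $[0,\pi/2)$, so there are $2n+1$ vertex-crossing events and hence $2n+1$ combinatorial cells), but on each cell $D$ is a non-linear algebraic function of $\tan\theta$, and nothing you have written controls its number of zeros there; ``at most one per cell'' is precisely what is not established. The $n=1$ case in the paper is settled by the very specific geometry of the arc family $\mathrm{Arc}(x)$ and its envelope, and that machinery does not generalise. So your assessment is accurate: the lower bound and the local statement near the regular polygon are within reach by your method, while the global upper bound remains the open content of the conjecture.
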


\section{Historical notes}

\subsection{Bernoulli's solution}
Bernoulli worked in a time before the use of Cartesian geometry had become widespread, so it is not surprising that he did not coordinatise and normalize the problem.  In any case, he did obtain a method for constructing triangles and their quadrisections. However, it is not made clear that the construction produces all possible triangles.

\begin{center}
\includegraphics[scale=.6]{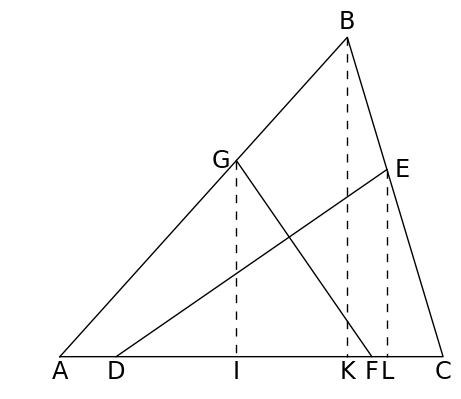}

\textbf{Bernoulli diagram}
\end{center}

Using Bernoulli's labeling of the triangle, let $AC=a$, $CB=b$, $BA=c$, $KB=d$, $KC=e$, $KA=f$, $CD=x$, $AF=y$. Note that Bernoulli's $x$ and $y$ are our $y$ and $x$.  Bernoulli derives versions of the area and perpendicularity equations:

\textbf{Bernoulli's equations:}
$y^2=4\,a\,y-4\,x\,y -2\,\frac{1}{2}\,a^2 + 4\,a\,x - x^2$ and
$y^2=\pm \frac{1}{2}a\,f+\dfrac{a^2\,d^2}{4\,x^2\pm 2\,a\,e}$.

If we normalize these equations by letting $a=1$, then $d=ht$, $f=h$, $e=1-h$, and Bernoulli's equations are our \Aeq and \Peq.  He then reduces his equation to a polynomial equation in one variable of degree 8.  If we  normalize by letting $a=1$, then the polynomial 'simplifies' to

\noindent $p(x)=x^{8}- 8  x^{7} + {(3  b^{2} - 3  c^{2} + 17)} x^{6}  - 2  {(b^{2} - c^{2} + 5)} x^{5}\\ - \frac{1}{4}  {(3  b^{4} - 6  b^{2} c^{2} + 3  c^{4} + 38  b^{2} - 24  c^{2} + 17)} x^{4}\\
+ (b^{4} - 2  b^{2} c^{2} + c^{4} + 12  b^{2} - 6  c^{2} + 5) x^{3}
+ \frac{1}{4}  {(4  b^{4} - 5  b^{2} c^{2} + c^{4} - 7  b^{2} - 1)} x^{2}\\
- \frac{1}{2}  {(4  b^{4} - 5  b^{2} c^{2} + c^{4} + 5  b^{2} - 2  c^{2} + 1)} x\\
+\frac{3}{4}  b^{4} - \frac{3}{4}  b^{2} c^{2} + \frac{3}{4} b^{2} - \frac{1}{8}  c^{2}+ \frac{1}{16}  c^{4} + \frac{1}{16}=0$.

Bernoulli describes a  method for constructing simultaneously a triangle and a quadrisection using the area and perpendicularity equations, and illustrates it by constructing a triangle with $a=484, b=490, c=495, x=386$.  Checking this with his normalized polynomial, we get $ p(386/484)\,484=2.85$, which is not $0$ but relatively close to $0$. The correct value rounded to two decimals for this $x$ is $x=368.86$.  This triangle is close enough to equilateral that it has 3 quadrisections.

\subsection{Euler's clarification}

Euler  chooses to use angles $\alpha, \beta, \gamma, \phi$ in his analysis.

\begin{center}
\includegraphics[scale=.6]{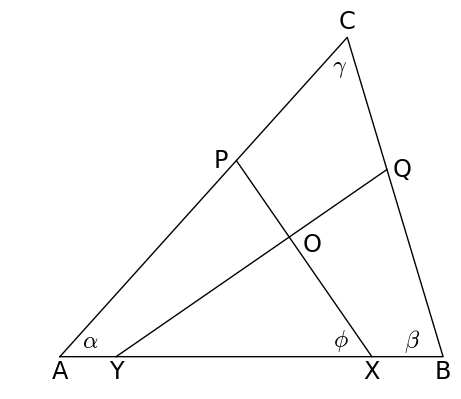}

\textbf{Euler's diagram}
\end{center}

Using the diagram, let $AX=x$, $YB=y$, $f=\cot \alpha, g=\cot \beta$,and $ t=\tan \phi$. Then he shows that $x=k\,\sqrt{f+1/t}$, $y=k\,\sqrt{g+t}$, where $k^2$ denotes the area of the triangle.  Next he obtains a single equation in the unknown $t$:

\textbf{Euler's equation}  $\sqrt{f+\dfrac{1}{t}}+\sqrt{g+t}-\sqrt{2\,(f+g)} = \sqrt{\dfrac{1+t^2}{2\,t}}$

Euler then makes use of the equation to construct a lengthy direct proof of his theorem that every scalene triangle has a quadrisection with its triangular part on the middle side.

He also shows how to use his equation to estimate the value $\phi$ to the nearest second for the right triangle with sides $2,1,\sqrt{5}$ and also calculates $x=1.5146$.   This right triangle, as with all right triangles, has only one quadrisection, and the correct value for $x$ rounded to 5 decimals is $x=1.51443$, so his estimate is pretty close.

\subsection{An explanation of interest.}
I became aware of the triangle quadrisection problem when looking up biographical information about Jacob Bernoulli in connection with a fictional story (\textit{An Elegant Solution}, by Paul Robertson, 2013) about a young Leonard Euler and the Bernoulli brothers, Johann and Jacob.   Several entries mentioned this as his contribution to geometry.  It is interesting that both mathematicians in this story wrote papers on this same subject.

I want to thank my colleagues Dr. Fred Halpern and Aaron Raden for helpful discussions during the writing of this paper.
\begin{bibdiv}
\begin{biblist}

\bib{Bernoulli}{article}{
title={No. XXIX. Solutio algebraica problematis de quadrisectione trianguli scaleni, per duas normales rectas},
author={Bernoulli, Jacob},
date={1687},
journal={Collected Works, found at  \url{http://e-rara.ch/zut/content/pageview/127831}},
pages={228--335}}


\bib{Euler1}{article}{
title={Dilucidationes Problemate Geometrico De Quadrisecione Trianguli a Jacobo Bernoulli.},
author={Euler, Leonhard},
date={1779},
Journal={Euler Archive, Index Number E729, found at \url{http://eulerarchive.maa.org}}
}
\end{biblist}
\end{bibdiv}

\end{document}